\numberwithin{equation}{subsection}
\newtheorem{theorem}[equation]{Theorem}
\newtheorem{lemma}[equation]{Lemma}
\theoremstyle{definition}
\newtheorem{rmk}[equation]{Remark}
\newenvironment{remark}[1][]{\begin{rmk}[#1] \pushQED{\qed}}{\popQED \end{rmk}}
\newtheorem{eg}[equation]{Example}
\newtheorem{defn}[equation]{Definition}
\newcommand{\rD}{\mathrm{D}}
\newcommand{\rE}{\mathrm{E}}
\newcommand{\bF}{\mathbf{F}}
\newcommand{\bP}{\mathbf{P}}
\newcommand{\bS}{\mathbf{S}}
\newcommand{\rT}{\mathrm{T}}
\newcommand{\bZ}{\mathbf{Z}}
\newcommand{\fg}{\mathfrak{g}}
\newcommand{\bk}{\mathbf{k}}
\renewcommand{\phi}{\varphi}
\newcommand{\ol}[1]{\overline{#1}}
\newcommand{\arxiv}[1]{\href{http://arxiv.org/abs/#1}{{\tt arXiv:#1}}}
\def\Ddots{\mathinner{\mkern1mu\raise\p@
\vbox{\kern7\p@\hbox{.}}\mkern2mu
\raise4\p@\hbox{.}\mkern2mu\raise7\p@\hbox{.}\mkern1mu}}
\DeclareMathOperator{\rank}{rank}
\DeclareMathOperator{\Sym}{Sym}
\DeclareMathOperator{\Spec}{Spec}
\newcommand{\GL}{\mathbf{GL}}
\newcommand{\SL}{\mathbf{SL}}
\newcommand{\fgl}{\mathfrak{gl}}
\newcommand{\gen}{\mathrm{gen}}
\begin{document}

\title{Schubert varieties and finite free resolutions of length three}

\author{Steven V Sam}
\address{Department of Mathematics, University of California at San Diego, La Jolla, CA  92093}
\email{ssam@ucsd.edu}

\author{Jerzy Weyman}
\address{Department of Mathematics, University of Connecticut, Storrs, CT 06269\newline
  \indent Uniwersytet Jagiello\'nski, Krak\'ow, Poland}
\email{jerzy.weyman@uconn.edu, jerzy.weyman@uj.edu.pl}

\date{April 28, 2020}
\subjclass[2010]{13D02, 13H10, 14M15}

\dedicatory{Dedicated to Laurent Gruson with thanks for his guidance and friendship.}

\maketitle
\begin{abstract}
In this paper we describe the relationship between the finite free resolutions of perfect ideals in split format (for Dynkin formats) and certain intersections of opposite Schubert varieties with the  big cell for homogeneous spaces $G/P$ where $P$ is a maximal parabolic subgroup.

\end{abstract}
\section{Introduction}

Perfect ideals of codimension three have been investigated for a long time. Linkage theory suggests that such ideals might be possible to classify. Indeed, if one applies minimal linkage to a perfect ideal of codimension three, one gets an ideal with a minimal free resolution with the same sum of Betti numbers as the original one, and after a double link one obtains an ideal whose free resolution has modules of the same ranks as for the original ideal.

In the paper \cite{JWm16} the second author constructed the generic rings ${\hat R}_{\gen}$ for resolutions of length three for any format. It turns out that their structure is related to the Kac--Moody Lie algebra of the $T$-shaped graph $\rT_{p,q,r}$ where the triple $(p,q,r)=(r_1+1, r_2-1, r_3+1)$ is constructed from the ranks $r_1, r_2, r_3$ of the differentials $d_1, d_2, d_3$ of our resolution.
We denote the vertices of $\rT_{p,q,r}$ as follows
$$\begin{array}{*{13}{c}} x_{p-1}&-&x_{p-2}&\ldots&x_{1}&-&u&-&y_{1}&\ldots&y_{q-2}&-&y_{q-1}\\
&&&&&&|&&&&&&\\
&&&&&&z_{1}&&&&&&\\
&&&&&&|&&&&&&\\
&&&&&&\vdots&&&&&&\\
&&&&&&z_{r-2}&&&&&&\\
&&&&&&|&&&&&&\\
&&&&&&z_{r-1}&&&&&&
\end{array}$$

This construction suggests that a special role is played by the so-called Dynkin formats for which the graph $\rT_{p,q,r}$ is Dynkin.
In fact, the ring ${\hat R}_{\gen}$ is Noetherian precisely in such cases. One also conjectures (for the case $r_1=1$, i.e., the resolutions of cyclic modules $R/I$) that these are precisely the formats for which every such ideal is licci.

The next idea is to relate the rings ${\hat R}_{\gen}$ to the resolutions of perfect ideals of a given format. The natural idea is to describe an open set $U_{\mathrm{CM}}$ (the Cohen--Macaulay locus) in $\Spec {\hat R}_{\gen}$ where the dual of the generic resolution ${\bf F}^{\gen}_\bullet$ is acyclic. In \cite{CVW-3} and \cite{KHLJW18}  a precise conjecture on the form of the subset $U_{\mathrm{CM}}$ for Dynkin formats is given.
This conjecture is based on the observation that for Dynkin formats (except for the format $(1,n,n,1)$ with $n$ odd), the three top graded components of three critical representations in ${\hat R}_{\gen}$ can be thought of as the differentials in another complex ${\bf F}^{\mathrm{top}}_\bullet$ of the format $(f_0, f_1, f_2, f_3)$ over ${\hat R}_{\gen}$. The conjecture then says that the open set $U_{\mathrm{CM}}$ is equal to the set $U_{\mathrm{split}}$ where the complex ${\bf F}^{\mathrm{top}}_\bullet$ is split exact.

The next task is to find the precise form of the general ideal resolved by a resolution corresponding to a point in $U_{\mathrm{split}}$.
This problem is resolved in \cite{CVW-3} and in \cite{CJKW18} for the formats of type $\rD_n$ and $\rE_6$. It is also proved in \cite{CVW-3} that for the formats $(1,n,n,1)$ with $n$ even, and $(1,4,n,n-3)$, we have the equality $U_{\mathrm{CM}}=U_{\mathrm{split}}$.

The method employed there consists of identifying the generators of ${\hat R}_{\gen}$ occurring in some critical representations in ${\hat R}_{\gen}$ and calculating these structure theorems for split exact resolutions using the defect variables. The structure theorems corresponding to the top graded components then give the differentials in the complex which is a resolution of a perfect ideal of codimension three which corresponds to a general point in $U_{\mathrm{CM}}$. For the remaining types $\rE_7$ and $\rE_8$ the analogous method is very difficult to employ because the representations $W(d_3)$, $W(d_2)$, $W(a_2)$ are too complex.

In the present paper we try another approach. We try to construct the relevant resolutions of Dynkin formats by geometric means. A very interesting geometric pattern appears. We prove that for all the Dynkin types there is a certain  Schubert variety $\Omega_\lambda$ of codimension 3  in a homogeneous space $G(\rT_{p,q,r})/P_{x_1}$ ($G(\rT_{p,q,r})$ denotes the simply-connected simple group corresponding to the graph $\rT_{p,q,r}$, $P_{x_1}$ is a  maximal parabolic subgroup corresponding to the vertex $x_1$ in the above notation), such that the defining ideal of the intersection $Y_\lambda$ of $\Omega_\lambda$ with the opposite big open cell $Y$ has exactly the resolution of the format corresponding to the triple $(p,q,r)$. We expect the singularity of $Y_\lambda$ to be the singularity of the general point in $U_{\mathrm{split}}$ (this is established for the types $\rD_n$ and $\rE_6$).

This proves the inclusion $U_{\mathrm{split}}\subseteq U_{\mathrm{CM}}$ and shows that the expected general form of the finite free resolutions of Dynkin formats show a definite pattern. The pattern conjecturally extends to the resolutions of cyclic modules of arbitrary format. This will be taken up in a future paper.

\subsection*{Acknowledgements}
SS was partially supported by the NSF grant DMS-1849173. 
JW was partially supported by the NSF grant DMS 1802067 and by the grant from Narodowa Agencja Wymiany Akademickiej NAWA in Poland.

This collaboration started during the August 2019 workshop organized by the first author at UCSD.
We thank Shrawan Kumar for patiently answering several questions on Schubert varieties. 
We also benefited from many discussions with Oana Veliche, Lars Christensen, Ela Celikbas, Jai Laxmi, Witold Kra\'skiewicz and Kyu-Hwan Lee.

\section{General setup}

Throughout we work over a field $\bk$. (Our constructions will be insensitive to the choice of field and in fact will be defined over $\bZ$, but we keep a field to avoid making extra remarks throughout.)

Our goal is to describe the general complex  from the open set $U_{\mathrm{split}}$ of $\Spec ({\hat R}_{\gen})$ for a Dynkin format.
This seems to be related to Schubert varieties in homogeneous spaces for groups of type $\rT_{p,q,r}$ and goes as follows.
Consider the Dynkin formats of free resolutions of length 3 of cyclic modules, i.e., assume the rank is $r_1 = 1$.
This means that the corresponding graph $\rT_{p,q,r}$ is $\rT_{2,q,r}$.
We also assume this graph is of Dynkin type.

We fix root data for our group $G$ and look at the homogeneous space $G/P$ where $P=P_{x_1}$ is the maximal parabolic subgroup corresponding to the vertex on the arm of length $1$ corresponding to $p$. We let $B$ and $B^-$ denote the Borel subgroup and opposite Borel subgroup, respectively.
We use basic facts on Schubert varieties, described for example in \cite[\S 3.3]{LR08}. Our notation is for opposite Schubert varieties, i.e., an element of length $\ell$ indexes a variety of codimension $\ell$.
The opposite Schubert varieties in $G/P$ are described by the cosets $W/W_P$ where $W$ is the Weyl group corresponding to $\rT_{p,q,r}$ and $W_P$ is the Weyl group corresponding to smaller root system we get from $\rT_{p,q,r}$ by removing $x_1$. In our case we can identify $W/W_P$ with the $W$-orbit of the fundamental weight $\omega_{x_1}$, and we now describe how this works.

In what follows, we represent weights as a sum of fundamental weights, and represent these by labeling the vertices of $\rT_{p,q,r}$ by the corresponding coefficients. We deal with the Weyl group action on the weight
\[
  \sigma_0=\omega_{x_1}=\begin{matrix}
1&0&0&0\cdots&0\\&0&&&&\\
&\vdots&&&&\\
&0&&&&
\end{matrix}.
\]
Acting by elements of length $\ell$ in $W$ of minimal length in their coset, we get opposite Schubert varieties of codimension $\ell$.
Thus for length $0$ we get $\sigma_0$. For length $1$ we get
$$
\sigma_1=
\begin{matrix}
-1&1&0&0\cdots&0\\&0&&&&\\
&\vdots&&&&\\
&0&&&&
\end{matrix}
$$
and for length $2$
$$\sigma_2=\begin{matrix}
0&-1&1&0\cdots&0\\&1&&&&\\
&\vdots&&&&\\
&0&&&&
\end{matrix}.
$$
Finally for length $3$ we get two possibilities
$$\sigma_3=\begin{matrix}
0&0&1&0&\cdots&0\\&-1&&&&&\\
&1&&&&\\
&\vdots&&&&\\
&0&&&&
\end{matrix}, \qquad
\sigma'_3=\begin{matrix}
0&0&-1&1&\cdots&0\\&1&&&&\\
&0\\
&\vdots&&&&\\
&0&&&&
\end{matrix}.
$$

This picture says that in $G(\rT_{2,q,r})/P_{x_1}$ there is one opposite Schubert variety of codimension $1$, one of codimension $2$, and two of codimension $3$. Define $\Omega^w_P={\ol{B^-wP/P}}$ in $G/P$. We consider the opposite Schubert varieties $\Omega^{\sigma_3}_P$ and $\Omega^{\sigma'_3}_P$ and their intersections $Y^w_P$ with the  big cell $Y$. The cell $Y$ is the unique open $B$-orbit. The varieties $Y_{\sigma_3}, Y_{\sigma'_3}$ are known to be normal Cohen--Macaulay subvarieties of codimension $3$ (see \cite[Proposition 3.4]{KS09}). 

But the same picture represents the extremal Pl\"ucker coordinates of $G(\rT_{2,q,r})/P_{x_1}$ (see \cite[\S 3.3.1]{LR08}) embedded in $\bP(V_{x_1})$ where $V_\lambda$ denotes the irreducible highest weight module of highest weight $\lambda$. Each element $w \in W/W_P$ determines a unique linear function $p_w$ (defined up to scalar) on $\bP(V_{x_1})$ and the defining equations of $Y_{\sigma_3}$ and $Y_{\sigma'_3}$ are the Pl\"ucker coordinates $p_\tau$ such that $\tau$ is not $\le$ than $\sigma_3$ (resp. $\sigma'_3$) in the Bruhat order. This follows from the identification of the linear functions on $\Omega^w_P$ with Demazure modules (see \cite[\S\S 3.3.1, 3.3.3]{LR08}).
In this setup, $Y$ is given by the condition $p_{{\rm top}} \ne 0$ where $p_{{\rm top}}$ is the extremal Schubert coordinate corresponding to the coset of maximal length $\ell = \dim G(\rT_{2,q,r})/P_{x_1}$.

In what follows, working with the parabolic $P_{x_1}$ we give two interpretations of the varieties $Y_{\sigma_3}$, $Y_{\sigma'_3}$: one in terms of Pl\"ucker coordinates, the other in equivariant form related to the grading on $\fg(\rE_m)$ ($m=6,7,8$) corresponding to the simple root $\alpha_{x_1}$. The open cell $Y$ is identified with the unipotent radical of the opposite parabolic subgroup. The functions on this space can be identified with the positive portion of the Lie algebra $\fg(\rE_m)$.

\section{Type $\rE_6$}

The Pl\"ucker coordinates vanishing on $Y_{\sigma_3}$, where
$$\sigma_3 =\begin{matrix}
0&0&1&0\\&-1&&\\
&1&&
\end{matrix}$$
 are the ones corresponding to the elements
$$\begin{matrix}1&0&0&0\\
&0&&\\
&0&&
\end{matrix}, \qquad
\begin{matrix}-1&1&0&0\\
&0&&\\
&0&&
\end{matrix}, \qquad
\begin{matrix}0&-1&1&0\\
&1&&\\
&0&&
\end{matrix}, \qquad
\begin{matrix}0&0&-1&1\\
&1&&\\
&0&&
\end{matrix}, \qquad
\begin{matrix}0&0&0&-1\\
&1&&\\
&0&&
\end{matrix}.$$

There are also 5 Pl\"ucker coordinates vanishing on $Y_{\sigma'_3}$, symmetrically with respect to the two arms of length 3. The intersection of both ideals is a complete intersection given by the three Pl\"ucker coordinates

$$\begin{matrix}1&0&0&0\\
&0&&\\
&0&&
\end{matrix}, \qquad
\begin{matrix}-1&1&0&0\\
&0&&\\
&0&&
\end{matrix}, \qquad
\begin{matrix}0&-1&1&0\\
&1&&\\
&0&&
\end{matrix}$$
that are common in both ideals.

By construction, all opposite Schubert varieties are invariant under the action of the opposite Borel subgroup $B^-$. Generally, there is a larger parabolic subgroup that leaves it invariant. This is explained for Schubert varieties in \cite[Proposition 1.4]{LMS}, but we can use the involution $w \mapsto w w_0^P$ where $w_0^P$ is the minimal length coset representative of the maximal element in $W/W_P$ to get the corresponding statement for opposite Schubert varieties. Namely, the opposite Schubert variety $\Omega^w$ is closed under the root subgroups labeled by a simple root $\alpha$ if $\ell(s_\alpha w) > \ell(w)$.

We see that the ideal of $\Omega^{\sigma_3}_P$ is invariant under the action of the root subgroups corresponding to $\alpha_1, \alpha_4, \alpha_5, \alpha_6$. The Levi subgroup of the corresponding parabolic subgroup is isomorphic to $\GL_2 \times \GL_4$. Analogously, $\Omega^{\sigma_3'}_P$ is invariant under the action of the root subgroups corresponding to $\alpha_1, \alpha_3, \alpha_4, \alpha_6$ and the corresponding Levi subgroup is isomorphic to $\GL_4 \times \GL_2$. The first equation corresponding to $\sigma_0$ generates an ideal which is invariant under $\alpha_1, \alpha_3, \alpha_4, \alpha_5, \alpha_6$, and the corresponding Levi subgroup is isomorphic to $\GL_6$.
 
The equivariant picture is as follows. First, $G/P$ is embedded in the fundamental representation $\bP(V_{\omega_2})$ which is the adjoint representation. Consider the root space decomposition of $\fg(\rE_6)$ and coarsen it to a $\bZ$-grading by considering the coefficient of $\alpha_1$. Then the grading ranges from $-2$ to $2$ and
\[
  \fg_0 = \fgl_6,\qquad \fg_1=\bigwedge^3 \bk^6, \qquad \fg_2 = \bigwedge^6\bk^6.
\]
To get the big cell, we set $p_{\rm top}=1$ where $p_{\rm top}$ spans $\fg_{2}$. The ideal of the embedding of $G/P$ into the adjoint representation is generated by quadratic polynomials, which are homogeneous with respect to the $\alpha_1$-grading. In particular, given any relation that involves $p_{\rm top} q$ for some homogeneous element $q$, we can express $q$ as a sum of product of other elements of smaller $\alpha_1$-degree. In particular, a homogeneous element of $\alpha_1$-degree $d$ restricts to a homogeneous polynomial of degree $2-d$ on $Y$.

The generators for the functions on $Y$ can be identified with $\bigwedge^3 \bk^6 \oplus \bigwedge^6 \bk^6$ where the first space has degree 1 and the second space has degree 2 (and is a subspace of $\fg_0$). Let $A$ denote the coordinate ring of $Y$.

\begin{theorem}
The minimal free resolution of $Y_{\sigma_3}$ is of the form:
\[
  0\rightarrow A^2(-7)\rightarrow A^6(-5)\rightarrow A(-4)\oplus A^4(-3)\rightarrow A.
  \]
  The minimal free resolution of $Y_{\sigma'_3}$ has the same form. Furthermore, the ideals of these varieties are linked by a complete intersection of length $3$.
  The sum of the two ideals has a resolution of format
\[
  0\rightarrow A(-10)\rightarrow A^6(-7)\oplus A(-6)\rightarrow A^{12}(-5)\rightarrow A(-4)\oplus A^6(-3)\rightarrow A.
\]
\end{theorem}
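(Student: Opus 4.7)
My plan is to resolve $A/I$ for $I = I(Y_{\sigma_3})$ first (the resolution for $A/I(Y_{\sigma'_3})$ then follows by the $\rE_6$-diagram symmetry swapping the two length-$2$ arms), then to show $I$ and $I'$ are linked by the complete intersection generated by the three shared Pl\"ucker coordinates, and finally to deduce the resolution of $A/(I+I')$ via a mapping cone. By the Pl\"ucker description above, $I$ is generated by the five listed coordinates restricted to $Y$, with degrees determined by ``$\deg = 2-d$'' (where $d$ is the $\alpha_{x_1}$-degree): one generator of degree $4$ and four of degree $3$. Since $\Omega^{\sigma_3}_P$ is normal Cohen--Macaulay of codimension $3$ by \cite[Proposition 3.4]{KS09}, so is $Y_{\sigma_3}$ in $Y$; Auslander--Buchsbaum then gives a length-three minimal resolution $0 \to F_3 \to F_2 \to F_1 \to A$ with $F_1 = A(-4) \oplus A^4(-3)$. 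To pin down $F_2, F_3$ I will invoke the $\GL_2 \times \GL_4$-equivariance of $I$ (from the larger parabolic stabilizer identified above), decompose each $F_i$ into isotypic components in a single internal degree, and match Hilbert series, forcing $F_2 = A^6(-5)$ and $F_3 = A^2(-7)$. An equivalent and perhaps cleaner route is to pushforward from a Bott--Samelson desingularization of $\Omega^{\sigma_3}_P$ via the Kempf--Lascoux--Weyman technique, using rationality of Schubert singularities together with Borel--Weil--Bott.

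\emph{Linkage.} Let $\fa = (p_{\sigma_0}, p_{\sigma_1}, p_{\sigma_2})$ be the ideal of the three shared Pl\"ucker coordinates, of degrees $4, 3, 3$. Since $\fa \subseteq I$ forces $\codim \fa \leq 3$ and $V(\fa) \supseteq V(I) \cup V(I')$ (of codimension $3$) forces $\codim \fa \geq 3$, the three generators form a regular sequence and $\fa$ is a complete intersection with Koszul resolution $0 \to A(-10) \to A^2(-7) \oplus A(-6) \to A(-4) \oplus A^2(-3) \to A$. With $A/I, A/I', A/\fa$ all Cohen--Macaulay of codimension $3$, I will apply the Peskine--Szpiro linkage criterion: verifying $\fa : I = I'$ (by a direct generator-level check, or by Hilbert-series bookkeeping) shows that $I$ and $I'$ are linked by $\fa$ and yields in particular $I \cap I' = \fa$.

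\emph{Sum via mapping cone.} The linkage gives the Mayer--Vietoris exact sequence $0 \to A/\fa \to A/I \oplus A/I' \to A/(I+I') \to 0$. Lifting $A/\fa \hookrightarrow A/I \oplus A/I'$ to a chain map from the Koszul resolution of $A/\fa$ into the direct sum of the two resolutions built in the first step, and forming the mapping cone, produces a (non-minimal) length-$4$ resolution of $A/(I+I')$. The only non-minimality arises from the three shared generators, each of which appears once in the Koszul summand of the cone and twice in the direct sum; cancelling these three identity blocks removes exactly $A(-4) \oplus A^2(-3)$ from both $F_1$ and $F_2$ of the cone, producing the stated resolution
\[0 \to A(-10) \to A^6(-7) \oplus A(-6) \to A^{12}(-5) \to A(-4) \oplus A^6(-3) \to A,\]
whose top term $A(-10)$ is inherited from the Koszul complex on $\fa$.

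\emph{Main obstacle.} The hardest step is the first: producing the precise ranks $6, 2$ and shifts $-5, -7$ for $F_2, F_3$ together with explicit equivariant differentials. Codimension $3$, Cohen--Macaulayness, and $\GL_2 \times \GL_4$-equivariance substantially narrow the Betti table but do not determine it uniquely, so one must either push the Hilbert-series computation carefully or build the geometric model via a desingularization. Once that step is in place, the linkage and mapping cone arguments are essentially formal.
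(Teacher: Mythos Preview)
Your outline has the right ingredients but inverts the order in which they can actually be made to work, and as you yourself flag under ``Main obstacle'', this leaves a real gap. You propose to pin down $F_2$ and $F_3$ \emph{first}, by $\GL_2\times\GL_4$-equivariance and Hilbert-series matching, and only afterwards establish the linkage. But equivariance together with Cohen--Macaulayness and the known $F_1$ does not by itself force $F_2=A^6(-5)$ and $F_3=A^2(-7)$, and you give no independent computation of the Hilbert series of $A/I$ to match against. The Bott--Samelson route you mention would work in principle but is not carried out.

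The paper closes this gap by reversing the logic: it establishes the linkage \emph{first} and uses it to determine $\bF_3$, then a short Hilbert-series argument determines $\bF_2$. Concretely: the ideal $(\alpha)$ generated by the three common Pl\"ucker coordinates is radical by \cite[Corollary~2.3.3]{brion-kumar}; this is the input that makes the set-theoretic equality $V(\alpha)=Y_{\sigma_3}\cup Y_{\sigma'_3}$ upgrade to $(\alpha):I_{\sigma_3}=I_{\sigma'_3}$ (your ``direct generator-level check'' is not obviously feasible, and your Hilbert-series check is circular at this stage). Now the linkage mapping cone (dualize the cone of the Koszul resolution of $(\alpha)$ into the partially unknown resolution of $A/I_{\sigma_3}$ and twist by $-10$) resolves $A/I_{\sigma'_3}$; comparing its first syzygy term with the \emph{known} generator degrees of $I_{\sigma'_3}$ forces $\bF_3=A(-7)^2$. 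With $\rank\bF_2=6$ fixed by Euler characteristic, write $d_1,\dots,d_6$ for the twists; divisibility of $1-4t^3-t^4+\sum t^{d_i}-2t^7$ by $(1-t)^3$ yields $\sum d_i=30$ and $\sum d_i^2=150$, and the hyperplane $\sum d_i=30$ meets the sphere $\sum d_i^2=150$ only at $(5,\dots,5)$. This tangency trick is what replaces your unexecuted equivariance/desingularization step.

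Your mapping-cone argument for $I_{\sigma_3}+I_{\sigma'_3}$ via Mayer--Vietoris is correct and agrees with the paper's, which packages it through \cite{BU90} (Gorensteinness of the sum and the canonical-module mapping cone).
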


The numerator of the reduced Hilbert series of both $\bk[Y_{\sigma_3}]$ and $\bk[Y_{\sigma'_3}]$ is
\[
  2t^{4}+6t^{3}+6t^{2}+3t+1
\]
and they both  have degree 18. The numerator of the reduced Hilbert series of $\bk[Y_{\sigma_3} \cap Y_{\sigma'_3}]$ is
\[
  t^{6}+4t^{5}+10t^{4}+14t^{3}+10t^{2}+4t+1
\]
and its degree is 44.

\begin{proof}
  We know that the resolution of $Y_{\sigma_3}$ is of the form
  \[
    0 \to \bF_3 \to \bF_2 \to A(-4) \oplus A(-3)^4 \to A. 
  \]
Consider the ideals $I_{\sigma_3}$ and $I_{\sigma'_3}$ in $A$ and the ideal $(\alpha)=(p_{\rm id}, p_{s_{x_1}}, p_{s_u s_{x_1}})$ generated by the first three Pl\"ucker coordinates. This ideal is radical by \cite[Corollary 2.3.3]{brion-kumar}. It follows that $I_{\sigma_3}$ and $I_{\sigma'_3}$ are linked via $(\alpha)$, as this is true set-theoretically and both ideals 
$(\alpha):I_{\sigma_3}$ and $(\alpha):I_{\sigma'_3}$ are radical. Now using \cite[\S 1]{BU90} we see that the ideals  $I_{\sigma_3}$ and $I_{\sigma'_3}$ are linked via $\alpha$, which is a complete intersection generated by polynomials of degrees $3,3,4$.

By the theory of linkage (see for example \cite[\S 5]{DABDEs77}) the dual of the mapping cone of the resolution of the complete intersection to the resolution of $Y_{\sigma_3}$ gives a resolution for $Y_{\sigma'_3}$ (up to a grading shift). This has the form
  \[
    0 \to A(-10) \to \begin{array}{c} A(-10) \oplus\\ A(-7)^4 \oplus\\ A(-6)\end{array}
    \to \begin{array}{c} A(-7)^2 \oplus\\ A(-6) \oplus\\          \bF_2^*(-10) \end{array}
    \to  
    \begin{array}{c} \bF_3^*(-10) \oplus\\ A(-4) \oplus\\ A(-3)^2  \end{array}
    \to A
  \]
  The last few terms cancel since they identify generators and relations from the original two resolutions, and the result is
  \[
    0 \to A(-7)^2 \to \bF_2^*(-10) \to
    \begin{array}{c} \bF_3^*(-10) \oplus\\ A(-4) \oplus\\ A(-3)^2 \end{array}
    \to A
  \]                     
We know that the ideal of $Y_{\sigma'_3}$ is minimally generated by 4 cubics and 1 quartic, and the differential $\bF_2^* \to \bF_3^*$ is minimal, so we conclude that $\bF_3 = A(-7)^2$. This tells us that $\rank \bF_2 = 6$. Let $d_1\le \cdots \le d_6$ be the degrees of its generators. Since $Y_{\sigma_3}$ has codimension 3, the alternating sum
\[
1 - 4t^3 - t^4 + \sum_{i=1}^6 t^{d_i} - 2t^7
\]
is divisible by $(1-t)^3$, which translates to its first two derivatives being 0 at $t=1$:
\begin{align*}
  -12 - 4 + \sum_{i=1}^6 d_i - 14  = 
  -24 - 12 + \sum_{i=1}^6 d_i(d_i-1) - 84 = 0,
\end{align*}
or equivalently,
\begin{align*}
  \sum_{i=1}^6 d_i = 30, \qquad \sum_{i=1}^6 d_i^2 = 150.
\end{align*}
The plane $\sum_{i=1}^6 d_i=30$ is tangent to the sphere defined by the second equation at the point $(5,5,5,5,5,5)$, so this point is the unique solution to both equations.

By \cite[Proposition 1.3]{BU90}, the ideal $I_{\sigma_3}+I_{\sigma'_3}$ is Gorenstein. This is the defining ideal of the Schubert variety $Y_{\sigma_4}$ of codimension $4$ which is contained both in $Y_{\sigma_3}$ and $Y_{\sigma'_3}$. To get the form of its resolution we again appeal to \cite[Proposition 1.3]{BU90} which implies that this resolution is a mapping cone of the map of complexes covering an injection of the canonical module of the coordinate ring of $Y_{\sigma_3}$ (resp. $Y_{\sigma'_3}$) into the resolution of the coordinate ring of $Y_{\sigma_3}$ (resp. $Y_{\sigma'_3}$).
\end{proof}

\subsection{Linear section}

A certain linear section of the first minimal free resolution is described in \cite{CJKW18}. The description of the generators given there is a little different, as the present pattern was not observed at that time. 

Now we consider the restriction of the ideals to $\Sym(\bigwedge^3 \bk^6)$.

\begin{lemma}\label{lem:identificatione6}
  The restriction of $p_{\sigma_3}$ to $\bigwedge^3 \bk^6$ is the unique (up to scalar) $\SL_6$-invariant $\Delta$ of degree $4$ on $\bigwedge^3 \bk^6$.
\end{lemma}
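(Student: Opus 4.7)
My plan is a three-step identification: show the restriction of $p_{\sigma_3}$ is $\SL_6$-invariant, pin down its polynomial degree as $4$, and invoke one-dimensionality of the degree-$4$ invariant space on $\bigwedge^3\bk^6$. Verifying non-vanishing will be the main obstacle.

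For the $\SL_6$-invariance I would compute the $\fgl_6=\fg_0$-weight of $p_{\sigma_3}$. The Pl\"ucker coordinate has $\fg$-weight $-\sigma_3$; the $\omega_{x_1}$-direction spans the center of $\fgl_6$ and restricts trivially to $\fsl_6$, so after accounting for this central contribution one checks that the residual $\fsl_6$-weight of $p_{\sigma_3}$ vanishes. Consequently the restriction of $p_{\sigma_3}|_Y$ to $\Sym(\bigwedge^3\bk^6)$---obtained by setting the $\bigwedge^6\bk^6$ coordinates to zero---is $\SL_6$-invariant.

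For the degree count I would use the $\alpha_{x_1}$-grading on $A$, in which $\bigwedge^3\bk^6$-coordinates have weighted degree $1$ and $\bigwedge^6\bk^6$-coordinates have weighted degree $2$. The polynomial $p_{\sigma_3}|_Y$ is homogeneous under this grading, and after restriction only the monomials supported on weight-$1$ variables survive; for these the polynomial and weighted degrees coincide, pinning down the polynomial degree of the restriction as $4$. Combined with $\SL_6$-invariance and the classical fact that $\bigwedge^3\bk^6$ is a prehomogeneous vector space for $\SL_6$ whose ring of $\SL_6$-invariants is the polynomial ring on one degree-$4$ generator $\Delta$---so that $\Sym^4(\bigwedge^3\bk^6)^{\SL_6}=\bk\cdot\Delta$ is one-dimensional---this forces the restriction to equal a scalar multiple of $\Delta$.

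The main obstacle is verifying that this scalar is nonzero. The cleanest route is to parametrize $Y\cong\fn^-_P$ by $n=n_1+n_2$ with $(n_1,n_2)\in\fg_{-1}\oplus\fg_{-2}$, expand $p_{\sigma_3}(\exp(n)\cdot v_{-\omega_{x_1}})$ as a polynomial in $n$, and exhibit a single nonzero coefficient of a pure-$n_1$ monomial by choosing $n_1$ supported on suitable root spaces of the $\rT_{2,3,3}$-root system; this avoids computing the full restriction explicitly and leverages the explicit combinatorics of how $\sigma_3$ is reached from $-\omega_{x_1}$ by a product of positive root operators.
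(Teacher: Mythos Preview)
Your overall strategy matches the paper's: establish $\SL_6$-invariance, pin down the degree as $4$ via the $\alpha_{x_1}$-grading, and invoke one-dimensionality of $\Sym^4(\bigwedge^3\bk^6)^{\SL_6}$. Two points deserve comment.

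\textbf{Invariance.} Your argument ``the residual $\fsl_6$-weight vanishes, hence $\SL_6$-invariant'' has a gap as stated: a vector of weight zero for the Cartan of $\fsl_6$ need not be $\SL_6$-invariant. What makes it work here is that the Pl\"ucker coordinate in question spans an extremal (one-dimensional) weight line, which is stabilized by the full Levi $L\cong\GL_6\subset P$; since $\SL_6$ has no nontrivial characters, it acts trivially on that line. The paper phrases this as: the principal ideal it generates is $\GL_6$-stable (this was established earlier via the parabolic-stabilizer description of Schubert varieties), so the generator is a $\GL_6$-semi-invariant and hence $\SL_6$-invariant. Same content, but you should make the one-dimensionality of the weight space explicit.

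\textbf{Non-vanishing.} Here your route genuinely differs. You propose to expand $p(\exp(n_1+n_2)\cdot v)$ and exhibit a nonzero pure-$n_1$ monomial by a root-by-root computation. This can be made to work but is laborious. The paper's argument is a one-liner: the quartic on $Y$ is an $\SL_6$-invariant of weighted degree $4$ in $A=\Sym(\bigwedge^3\bk^6\oplus\bigwedge^6\bk^6)$, and since $\Sym^2(\bigwedge^3\bk^6)^{\SL_6}=0$ (the natural pairing is alternating), the only such invariants are $\Delta$ and $h^2$ where $h$ generates $\bigwedge^6\bk^6$. If the restriction to $\bigwedge^3\bk^6$ were zero the quartic would be a scalar multiple of $h^2$, which is non-reduced --- contradicting that it cuts out a (reduced) Schubert divisor. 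This replaces your explicit calculation by a structural observation and is considerably cleaner.
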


The invariant $\Delta$ can be described as a hyperdiscriminant of the representation $\bigwedge^3 \bk^6$, i.e., the equation of the hypersurface projectively dual to the Grassmannian (see for example \cite[\S 9C]{JWm03}).

\begin{proof}
$p_{\sigma_3}$ is a lowest weight vector and hence has $\alpha_1$-degree $-2$. By what we said above, it becomes a quartic function upon restriction to $Y$. Next, we also said above that the ideal generated by $p_{\sigma_3}$ is invariant under $\GL_6$. This implies that $p_{\sigma_3}$ itself is invariant under the subgroup $\SL_6$. Next, we need to show that this restriction is nonzero. If not, then the quartic in $A$ is $h^2$, where $h$ is the generator of $\bigwedge^6 \bk^6$, which is non-reduced.
\end{proof}

As explained above, $Y_{\sigma_3}$ is invariant under the Levi subgroup $\GL_2 \times \GL_4$, so we choose a decomposition $\bk^6=F\oplus G$ where $F=\bk^4$, $G=\bk^2$ are spanned by the first 4, respectively last 2, coordinate vectors.

 So our restricted Schubert variety is in the affine space $\Spec(A')$ where
 \[
   A'=\Sym(\fg_1) = \Sym(\bigwedge^3 F\oplus\bigwedge^2F\otimes G\oplus F\otimes\bigwedge^2 G).
   \]
 
\begin{lemma} \label{lem:E6-eqn}
  The five defining equations of the restriction of $Y_{\sigma_3}$ to $\Spec(A')$ are
 $\Delta$, and $\frac{\partial\Delta}{\partial y_i}$, where $\lbrace y_i\rbrace$ is a basis of $(\bigwedge^3 F)^*$.
\end{lemma}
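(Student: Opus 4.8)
The plan is to exploit the $\GL(F)\times\GL(G)$-equivariance of $I_{\sigma_3}$ together with a multiplicity-one statement in degree three, after first checking that passing to the linear section leaves the shape of the resolution undisturbed. Concretely, I would begin by verifying that the degree-$2$ coordinate $h$ spanning $\bigwedge^6\bk^6$ is a nonzerodivisor on $A/I_{\sigma_3}=\bk[Y_{\sigma_3}]$; since $Y_{\sigma_3}$ is an integral variety this only says $Y_{\sigma_3}\not\subseteq\{h=0\}$, i.e.\ the Pl\"ucker coordinate of $\alpha_1$-degree $0$ realizing $h$ is $\ge\sigma_3$ in the Bruhat order. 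Tensoring the resolution of the Theorem with $A'=A/(h)$ then stays minimal, so $I':=I_{\sigma_3}A'$ is minimally generated by one quartic and four cubics; by Lemma~\ref{lem:identificatione6} the quartic is $\Delta$, and it remains only to identify the four cubics. Because $I'$ is $\GL(F)\times\GL(G)$-stable and has no generators in degree $<3$, its degree-three part $I'_3\subseteq A'_3$ is exactly the $4$-dimensional $\GL(F)\times\GL(G)$-module spanned by the cubic generators. Since $\GL_2$ has no irreducible of dimension $2$ or $3$ and the only $4$-dimensional irreducibles of $\GL_4$ are $F$ and $F^*$ up to a twist by a power of $\det F$, this module is irreducible, isomorphic to $\bigwedge^3F$ twisted by some character $\chi$ of $\GL(F)\times\GL(G)$; following the weights of the four cubic Pl\"ucker coordinates $p_{\sigma_1},p_{\sigma_2},p_{\sigma'_3},p_\tau$ (equivalently, reading off the linear section recorded in \cite{CJKW18}) identifies $\chi$ with the character by which $\Delta$ itself transforms.

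Next I would analyse the $\GL(F)\times\GL(G)$-equivariant map $\varphi\colon\bigwedge^3F\to A'_3$ sending $\eta$ to $\partial_\eta\Delta$; on the basis of $\bigwedge^3F$ dual to $\{y_i\}$ this returns the cubics $\partial\Delta/\partial y_i$. The key point is that $\varphi\ne 0$. Indeed, if $\varphi=0$ then $\Delta$ does not involve the coordinates dual to $\bigwedge^3F$, so $\{\Delta=0\}\subseteq\bP\big((\bigwedge^3\bk^6)^*\big)$ is a cone with vertex $\bP\big((\bigwedge^3F)^*\big)$; by biduality its dual variety, which is $\Gr(3,\bk^6)$, would then be contained in the proper linear subspace $\bP\big((\bigwedge^2F\otimes G)\oplus(F\otimes\bigwedge^2G)\big)$ of $\bP(\bigwedge^3\bk^6)$, contradicting the fact that the Grassmannian is not contained in any hyperplane of its Pl\"ucker space. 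Hence $\varphi\ne 0$, so its image is a copy of $\bigwedge^3F\otimes\chi$ (the twist being precisely the one by which $\Delta$ transforms, coming from its $\GL_6$-semi-invariance), and this image is spanned by $\partial\Delta/\partial y_1,\dots,\partial\Delta/\partial y_4$.

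To finish, I would expand $A'_3=\Sym^3\big(\bigwedge^3F\oplus\bigwedge^2F\otimes G\oplus F\otimes\bigwedge^2G\big)$ as a $\GL(F)\times\GL(G)$-module via Cauchy's formula and check that the isotype $\bigwedge^3F\otimes\chi$ occurs there with multiplicity exactly one: it lies in a single tri-degree summand, inside a constituent of the form $\bigwedge^2(\bigwedge^2F)\cong\fsl(F)$ tensored with standard and determinantal factors, and a one-line Pieri computation isolates the unique copy of $\bigwedge^3F$ sitting in $\fsl(F)\otimes F$. Since $I'_3$ and $\operatorname{image}\varphi$ are then two submodules of $A'_3$ of this unique isotype, they coincide; combined with the first paragraph this shows that $\Delta$ and $\partial\Delta/\partial y_1,\dots,\partial\Delta/\partial y_4$ are exactly the five minimal generators of $I'$.

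The formal skeleton of this argument — reducing to the linear section, ruling out $\varphi=0$ by biduality, and reducing the whole problem to matching two isotypic components — is short and robust. I expect the main obstacle to be the bookkeeping concentrated in two spots: pinning down the character $\chi$ by tracking the $\alpha_1$-grading (equivalently the $\GL(F)\times\GL(G)$-weights) of the cubic Pl\"ucker coordinates, and carrying out the explicit multiplicity-one plethysm of $\Sym^3$ of the three-term sum. Neither is deep, but both require care; a small-characteristic check for the biduality and for the nonvanishing of the partials would also be worth including (or one can simply work over $\bC$ and invoke the field-independence noted at the start).
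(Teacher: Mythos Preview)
Your approach is different from the paper's and, modulo one genuine slip, could be made to work. The paper does not do a multiplicity-one plethysm inside $\Sym^3(\bigwedge^3\bk^6)$ at all. Instead it uses a piece of structure you discard: the ideal of the opposite Schubert variety is invariant not only under $\GL(F)\times\GL(G)$ but also under the upper-triangular block $G^*\otimes F$ of the parabolic. That extra invariance forces the span of the four cubics to be a \emph{highest-weight} piece for the parabolic inside $\GL_6$; in other words, the four cubics generate, under the full $\fgl_6$-action, a single $\GL_6$-submodule of $\Sym^3(\bigwedge^3\bk^6)$, and they constitute its $\bigwedge^3 F$-component. Uniqueness of the quartic invariant $\Delta$ then says there is only one relevant $\GL_6$-submodule, namely the one spanned by all first partials of $\Delta$; picking out the $(\bigwedge^3 F)^*$-partials finishes. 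So the paper trades your $\GL(F)\times\GL(G)$-plethysm for the single classical fact that $\Delta$ is unique.

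Your argument has a real error in the isotype step: the claim that ``$\GL_2$ has no irreducible of dimension $2$ or $3$'' is false --- the standard representation has dimension $2$ and $\Sym^2\bk^2$ has dimension $3$ --- so the dimension count alone does not exclude $I'_3\cong(\text{character of }\GL(F))\otimes\Sym^3 G$ or a $2\times 2$ tensor factorization. You can recover by tracking the actual weights of the four cubic Pl\"ucker coordinates, as you go on to suggest, but then the dimension sentence does no work and should be deleted. After that repair your multiplicity-one computation in $A'_3$ is the remaining obstruction; it is believable but is exactly the bookkeeping the paper's parabolic-invariance trick was designed to avoid, which is why the published proof is three lines.
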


\begin{proof}
  We have already explained the appearance of $\Delta$. The remaining 4 functions have $\alpha$-degree $-1$ and hence restrict to cubic functions. By equivariance, these functions are closed under the action of $\GL(F) \times \GL(G)$ and the upper triangular block $G^*\otimes F$, so must span the space $\bigwedge^3 F$. Since the invariant $\Delta$ is unique, this space is spanned by its partial derivatives with respect to $(\bigwedge^3 F)^*$.
\end{proof}
 
 The five equations defining the ideal of $Y_{\sigma'_3}$ restricted to $\Spec(A')$ in this language are similar, we just decompose $\bk^6=F'\oplus G'$ with $\dim F'=2$,
 $\dim G'=4$.

 \begin{lemma}
   The five defining equations of the restriction of $Y_{\sigma'_3}$ to $\Spec(A')$ are  $\Delta$ and $\frac{\partial\Delta}{\partial z_i}$, where $\lbrace z_i\rbrace$ is a basis of the dual of $\bigwedge^2 F'\otimes G'$.
\end{lemma}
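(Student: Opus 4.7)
The plan is to follow the template of Lemma~\ref{lem:E6-eqn}, exploiting the $\bZ/2$ symmetry of the $\rE_6$ Dynkin diagram that swaps the two arms of length $3$ (interchanging $y_i \leftrightarrow z_i$) while fixing $x_1$ and $u$. This symmetry interchanges $\sigma_3$ and $\sigma'_3$, swaps the role of $F \oplus G$ with $F' \oplus G'$, and preserves the $\alpha_{x_1}$-grading and the Levi $\GL_6$, so the arguments transfer with appropriate relabeling. In particular, the unique quartic generator of $I_{\sigma'_3}$ restricts, by the argument of Lemma~\ref{lem:identificatione6} applied to $\sigma'_3$, to the $\SL_6$-invariant $\Delta$.

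For the four remaining cubic generators, I would identify the four-dimensional subspace $V_4 \subset \Sym^3(\bigwedge^3 \bk^6)$ they span as a parabolic subrepresentation. It is stable under the Levi $\GL(F') \times \GL(G') = \GL_2 \times \GL_4$ together with the off-diagonal $\fgl_6$-block contained in the parabolic $Q'$ stabilizing $\Omega^{\sigma'_3}_P$. Decomposing
\[
\bigwedge^3 \bk^6 \;=\; \bigwedge^3 G' \;\oplus\; \bigwedge^2 G' \otimes F' \;\oplus\; G' \otimes \bigwedge^2 F'
\]
(noting $\bigwedge^3 F' = 0$), this block acts along a chain $\bigwedge^3 G' \to \bigwedge^2 G' \otimes F' \to G' \otimes \bigwedge^2 F' \to 0$, so its unique closed four-dimensional subrepresentation is $G' \otimes \bigwedge^2 F' = \bigwedge^2 F' \otimes G'$. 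Thus $V_4 \cong \bigwedge^2 F' \otimes G'$ as a parabolic representation.

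To realize $V_4$ concretely, I would invoke the $\SL_6$-invariance of $\Delta$: the map $v \mapsto \partial\Delta/\partial v$ is $\SL_6$-equivariant from $(\bigwedge^3 \bk^6)^*$ into $\Sym^3(\bigwedge^3 \bk^6)$, and restricting it to the summand $(\bigwedge^2 F' \otimes G')^*$ produces a parabolic subrepresentation of the required isomorphism type, which must coincide with $V_4$ by the same uniqueness argument used in Lemma~\ref{lem:E6-eqn}. The main subtlety is to pin down the correct orientation of the off-diagonal block inside $\fq'$: since $Q'$ contains $B^-$ and the positive root subgroups of $\alpha_1, \alpha_3, \alpha_4, \alpha_6$ but not $\alpha_5$, the block in question is $\hom(G', F')$ rather than $\hom(F', G')$, and this is what forces the chain above to terminate (rather than begin) with $\bigwedge^2 F' \otimes G'$.
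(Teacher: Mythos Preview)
Your proposal is correct and follows essentially the same route as the paper. The paper's own proof is literally the single sentence ``The proof is similar to the proof of Lemma~\ref{lem:E6-eqn}'', so what you have written is an elaboration of exactly that template: identify the quartic generator with $\Delta$ via the argument of Lemma~\ref{lem:identificatione6}, use equivariance under the Levi $\GL(F')\times\GL(G')$ together with the appropriate off-diagonal nilpotent block to pin down the four-dimensional Levi-type of the cubic generators, and then realize that type inside $\Sym^3(\bigwedge^3\bk^6)$ via the $\SL_6$-equivariant map $v\mapsto\partial_v\Delta$.

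Your explicit invocation of the $\bZ/2$ diagram automorphism of $\rE_6$ is a nice addition: it makes precise \emph{why} the argument transfers (the automorphism carries $\sigma_3$ to $\sigma'_3$, the stabilizing parabolic of one to that of the other, and $(F,G)$ to $(G',F')$), rather than leaving this as an implicit ``by symmetry''. One small caution: when you speak of the ``unique closed four-dimensional subrepresentation'' you are describing the filtration on $\bigwedge^3\bk^6$, whereas $V_4$ actually sits inside $\Sym^3(\bigwedge^3\bk^6)$; the paper's proof of Lemma~\ref{lem:E6-eqn} is equally terse on this point, and your next paragraph (realizing $V_4$ through the derivative map and appealing to uniqueness of $\Delta$) is exactly what closes that gap. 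The orientation discussion at the end is convention-dependent but the conclusion is correct.
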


The proof is similar to the proof of Lemma~\ref{lem:E6-eqn}.

\section{Type $\rE_7$}

Much of the setup here follows that of the previous section. Rather than re-explain all of the details, we will just list the results of the calculations.

As before, we coarsen the root decomposition of $\fg(\rE_7)$ by considering the coefficient of $\alpha_2$. The non-negative components of the resulting $\bZ$-graded decomposition are as follows:
\[
  \fg_0 = \fgl_7(\bk), \qquad \fg_1 = \bigwedge^3 \bk^7, \qquad \fg_2 = \bigwedge^6 \bk^7.
\]
We let $A$ denote the coordinate ring of the big cell. We can identify its linear functions with the positive portion of this decomposition.

In this case, $V_{x_1}$ is not the adjoint representation, so we need its $\bZ$-graded decomposition as well (see \cite[\S 5.3.3]{KHLJW18}). We do not repeat it here, but just note that the grading goes from 0 to 7 and the top and bottom pieces are 1-dimensional. As before, setting the element in top degree to be nonzero cuts out the big cell.

The ideal of $Y_{\sigma_3}$ is generated by $6$ elements which are Pl\"ucker coordinates corresponding to the elements
$$\begin{matrix}1&0&0&0&0\\
&0&&&\\
&0&&&
\end{matrix}, \qquad
\begin{matrix}-1&1&0&0&0\\
&0&&&\\
&0&&&
\end{matrix}, \qquad
\begin{matrix}0&-1&1&0&0\\
&1&&&\\
&0&&&
\end{matrix},$$
$$
\begin{matrix}0&0&-1&1&0\\
&1&&&\\
&0&&&
\end{matrix}, \qquad
\begin{matrix}0&0&0&-1&1\\
&1&&&\\
&0&&&
\end{matrix}, \qquad
\begin{matrix}0&0&0&0&-1\\
&1&&&\\
&0&&&
\end{matrix}$$
The ideal of $Y_{\sigma'_3}$ is generated by five Pl\"ucker coordinates corresponding to the weights
$$\begin{matrix}1&0&0&0&0\\
&0&&&\\
&0&&&
\end{matrix}, \qquad
\begin{matrix}-1&1&0&0&0\\
&0&&&\\
&0&&&
\end{matrix}, \qquad
\begin{matrix}0&-1&1&0&0\\
&1&&&\\
&0&&&
\end{matrix},$$
$$
\begin{matrix}0&0&1&0&0\\
&-1&&&\\
&1&&&
\end{matrix}, \qquad
\begin{matrix}0&0&1&0&0\\
&0&&&\\
&-1&&&
\end{matrix}$$

\begin{theorem}
  The minimal free resolution of $Y_{\sigma_3}$ is of the form:
\[
0\rightarrow A^2(-13)\rightarrow A^7(-9)\rightarrow A(-7)\oplus A^5(-6)\rightarrow A.
  \]
  The minimal free resolution of $Y_{\sigma'_3}$ is of the form:
  \[
    0\rightarrow A^3(-13)\rightarrow A^7(-10)\rightarrow A(-7)\oplus A^4(-6)\rightarrow A
  \]
  Furthermore, the ideals of these varieties are linked by a complete intersection of length $3$.

The resolution for the sum of the two ideals has the form
\[
  0\rightarrow A(-19)\rightarrow A^7(-13)\oplus A(-12)\rightarrow A^7(-10)\oplus A^7(-9)\rightarrow A(-7)\oplus A^7(-6)\rightarrow A.
\]
\end{theorem}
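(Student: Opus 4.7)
The plan is to adapt the argument used for type $\rE_6$. Reading off the $\alpha_2$-grading of the listed weights gives the polynomial degrees of the generators on $Y$: the three common Plücker coordinates $p_{\sigma_0}, p_{\sigma_1}, p_{\sigma_2}$ have degrees $7, 6, 6$ respectively, while the remaining three generators of $I_{\sigma_3}$ and the remaining two of $I_{\sigma'_3}$ are all of degree $6$. Exactly as in the $\rE_6$ argument, the ideal $\alpha = (p_{\sigma_0}, p_{\sigma_1}, p_{\sigma_2})$ is radical by \cite[Corollary 2.3.3]{brion-kumar}; since set-theoretically $V(\alpha) = V(I_{\sigma_3}) \cup V(I_{\sigma'_3})$ and both of $(\alpha):I_{\sigma_3}$ and $(\alpha):I_{\sigma'_3}$ are radical, $\alpha$ is a complete intersection of codimension $3$ linking $I_{\sigma_3}$ and $I_{\sigma'_3}$.

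The next step is to pin down the top Betti numbers by linkage. The number of minimal generators of $I_{\sigma'_3}/\alpha$ is $5-3=2$, all of degree $6$. The linkage duality $I_{\sigma'_3}/\alpha \cong \omega_{A/I_{\sigma_3}}(N-s)$, where $s = 6+6+7 = 19$ is the sum of the degrees of the generators of $\alpha$ and $N = 49$ is the total weighted degree of the generators of $A$ (with $\bigwedge^3 \bk^7$ in degree $1$ and $\bigwedge^6 \bk^7$ in degree $2$), gives shift $N-s = 30$, so $\omega_{A/I_{\sigma_3}}$ is minimally generated in degree $36$. Computing $\omega_{A/I_{\sigma_3}}$ from the free resolution as $\coker(\bF_2^* \to \bF_3^*)(-49)$ then forces $\bF_3 = A^2(-13)$. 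The symmetric argument, with $I_{\sigma_3}/\alpha$ having $6-3=3$ minimal generators in degree $6$, gives $\bF'_3 = A^3(-13)$.

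To determine the middle term $\bF_2$, write $\bF_2 = \bigoplus_{i=1}^r A(-d_i)$ and require the alternating-sum polynomial $1 - t^7 - 5 t^6 + \sum_i t^{d_i} - 2 t^{13}$ to vanish to order $3$ at $t=1$; the resulting constraints are $r = 7$, $\sum d_i = 63$, and $\sum d_i^2 = 567$. As in the $\rE_6$ case, the plane $\sum d_i = 63$ is tangent to the sphere $\sum d_i^2 = 567$ at the balanced point, so the unique nonnegative integer solution is $d_1 = \cdots = d_7 = 9$; hence $\bF_2 = A^7(-9)$. The analogous calculation for $\bF'_2$ yields $A^7(-10)$.

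Finally, $I_{\sigma_3}+I_{\sigma'_3}$ is Gorenstein of codimension $4$ by \cite[Proposition 1.3]{BU90}, and its minimal free resolution arises as the mapping cone of a lift of the diagonal map $A/\alpha \hookrightarrow A/I_{\sigma_3} \oplus A/I_{\sigma'_3}$ to the corresponding resolutions. Let $K_\bullet$ denote the Koszul complex of $\alpha$; the cone has, in positions $0, 1, 2, 3, 4$, terms $A \oplus A$, $(\bF_1 \oplus \bF'_1) \oplus K_0$, $(\bF_2 \oplus \bF'_2) \oplus K_1$, $(\bF_3 \oplus \bF'_3) \oplus K_2$, and $K_3 = A(-19)$. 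Cancelling one copy of $A$ at position $0$, the three common generators at position $1$, and the summand $K_1$ at position $2$ produces precisely the claimed resolution. The main obstacle will be verifying that the lift of the diagonal matches the common generators cleanly enough for the cone to collapse to a minimal resolution of the stated form; this parallels the $\rE_6$ case, where the analogous matching was checked directly.
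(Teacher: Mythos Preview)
Your argument is correct and closely parallels the paper's $\rE_6$ proof (which is all the paper offers for $\rE_7$, instructing the reader to carry it over): linkage through the radical complete intersection $(\alpha)$, determination of $\bF_3$ via linkage duality, and the sphere/plane tangency trick for $\bF_2$. Your direct use of the isomorphism $I_{\sigma'_3}/\alpha \cong \omega_{A/I_{\sigma_3}}(N-s)$ to read off $\bF_3$ is a clean repackaging of the paper's explicit mapping-cone computation; the numerics ($\sum d_i=63$, $\sum d_i^2=567$, and the analogous $70,700$ for $\bF'_2$) check out exactly.

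For the Gorenstein sum you take a slightly different cone than the paper. You use the short exact sequence $0\to A/\alpha\to A/I_{\sigma_3}\oplus A/I_{\sigma'_3}\to A/(I_{\sigma_3}+I_{\sigma'_3})\to 0$ and the cone of $K_\bullet\to\bF_\bullet\oplus\bF'_\bullet$, which then needs the cancellations you describe. The paper instead (following \cite{BU90}) uses the sequence $0\to I_{\sigma'_3}/\alpha\to A/I_{\sigma_3}\to A/(I_{\sigma_3}+I_{\sigma'_3})\to 0$ and the cone of $\bF_\bullet^*(-19)\to\bF_\bullet$, which is minimal on the nose and makes the self-duality of the Gorenstein resolution visible. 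Your ``main obstacle'' is in fact no obstacle: after you cancel $K_0$ and $K_1$, no two adjacent homological degrees share a twist (compare $\{6,7\}$, $\{9,10\}$, $\{12,13\}$, $\{19\}$), so minimality is automatic for any choice of lift.
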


The numerator of the reduced Hilbert series of $\bk[Y_{\sigma_3}]$ is
\[
  2t^{10}+6t^9+12t^8+20t^7+23t^6+21t^5+15t^4+10t^3+6t^2+3t+1
\]
and hence it has degree 119. The numerator of the reduced Hilbert series of $\bk[Y_{\sigma'_3}]$ is
\[
  3t^{10}+9t^9+18t^8+23t^7+24t^6+21t^5+15t^4+10t^3+6t^2+3t+1
\]
and hence it has degree 133. The numerator of the reduced Hilbert series of $\bk[Y_{\sigma_3} \cap Y_{\sigma'_3}]$ is
\[
  t^{15}+4t^{14}+10t^{13}+20t^{12}+35t^{11}+56t^{10}+77t^{9}+
      91t^{8}+91t^{7}+77t^{6}+56t^{5}+35t^{4}+20t^{3}+10t^{2}+4
      t+1
    \]
    and it has degree 588.

\begin{remark}
We have verified with Macaulay2 that the restriction of $Y_{\sigma_3}$ and $Y_{\sigma'_3}$ to $\bigwedge^3 \bk^7$ continue to have codimension 3 (when ${\rm char}(\bk)=0$). We do not yet have a conceptual proof of this, but we will describe how these ideals look assuming this fact.

There is an invariant $\Delta$ of degree $7$ on $\fg_1$, which can be described as a hyperdiscriminant of the representation $\bigwedge^3 \bk^7$, i.e., the equation of the hypersurface projectively dual to the Grassmannian (see for example  \cite[\S 9C]{JWm03}).

To describe the restriction of $Y_{\sigma_3}$, we write $\bk^7=F\oplus G$ with $\dim F=5$, $\dim G=2$.
We consider the ring
\[
  A'=\Sym(\fg_1) = \Sym(\bigwedge^3F\oplus\bigwedge^2 F\otimes G\oplus F\otimes\bigwedge^2G).
  \]
and the equations of $Y_{\sigma_3}$ restrict to polynomials $\Delta$, $\partial\Delta /\partial{y_i}$, where $\lbrace y_i\rbrace$ is a basis of the dual of $F\otimes\bigwedge^2G$. The argument is similar to the previous section; the key point is why we get a nonzero multiple of $\Delta$: there are no degree 7 invariants on $\Sym(\bigwedge^3 \bk^7 \oplus \bigwedge^6 \bk^7)$ other than $\Delta$.

We get a resolution
$$0\rightarrow A'^2(-13)\rightarrow A'^7(-9)\rightarrow A'(-7)\oplus A'^5(-6)\rightarrow A'.$$
The equivariant form of this resolution is as follows (Here $\bS_\lambda$ denotes a Schur functor. This is valid in characteristic 0, and more care is needed in general to describe the correct representations.):
\[
  0\rightarrow \bS_{6,6,6,6,6}F\otimes \bS_{5,4}G\otimes A'(-13)\to
  \left(\begin{array}{c}
    \bS_{4,4,4,4,4}F\otimes \bS_{4,3}G \oplus\\
    \bS_{5,4,4,4,4}F\otimes \bS_{3,3}G
  \end{array} \right)
  \otimes A'(-9)\rightarrow
  \]
  $$\rightarrow \begin{array}{c}
                  \bS_{3,3,3,3,3}F\otimes \bS_{3,3}G\otimes A'(-7)\oplus \\
                  \bS_{3,3,3,3,2}F\otimes  \bS_{2,2}G\otimes A'(-6)
                \end{array}
                \rightarrow A'.$$

Now we describe the restriction of $Y_{\sigma'_3}$.
We decompose $\bk^7=F'\oplus G'$ where $\dim F'=3$, $\dim G'=4$. We get
$$A''=\Sym(\fg_1)=\Sym(\bigwedge^3 F'\oplus \bigwedge^2F'\otimes G'\oplus F'\otimes\bigwedge^2G'\oplus\bigwedge^3G').$$

We get the ideal of $Y_{\sigma'_3}$ generated by five elements 
$\Delta$, $\partial\Delta /\partial{z_i}$, where $\lbrace z_i\rbrace$ is a basis of the dual of $\bigwedge^3G'$.
This gives a resolution of the format
\[
  0\rightarrow A''^3(-13)\rightarrow A''^7(-10)\rightarrow A''(-7)\oplus A''^4(-6)\rightarrow A''.
\]

The equivariant form of this resolution is (in characteristic 0):
\begin{align*}
  0\rightarrow \bS_{7,6,6}F'\otimes \bS_{5,5,5,5}G' \otimes A''(-13)\rightarrow
\left(  \begin{array}{c}
    \bS_{5,5,4}F'\otimes \bS_{4,4,4,4}G' \oplus\\
    \bS_{5,5,5}F'\otimes \bS_{4,4,4,3}G'
  \end{array} \right)
  \otimes A''(-10)\rightarrow\\
  \rightarrow
  \begin{array}{c}
    \bS_{3,3,3}F'\otimes \bS_{3,3,3,3}G'\otimes A''(-7)\oplus\\
    \bS_{3,3,3}F'\otimes \bS_{3,2,2,2}G' \otimes A''(-6)
  \end{array}
  \rightarrow A''. 
\end{align*}
\end{remark}

\section{Type $\rE_8$}

Much of the setup here follows that of type $\rE_6$. Rather than re-explain all of the details, we will just list the results of the calculations.

The non-negative portion of the $\bZ$-graded decomposition of $\fg(\rE_8)$ with respect to $\alpha_2$ looks like
\[
  \fg_0 = \fgl_8(\bk), \qquad \fg_1 = \bigwedge^3 \bk^8, \qquad \fg_2 = \bigwedge^6 \bk^8, \qquad \fg_3 = \bigwedge^8 \bk^8 \otimes \bk^8.
\]
We let $A$ denote the coordinate ring of the big cell. We can identify its linear functions with the positive portion of this decomposition.

In this case, $V_{x_1}$ is not the adjoint representation, so we need its $\bZ$-graded decomposition as well (see \cite[\S 6.3.3]{KHLJW18}). We do not repeat it here, but just note that the grading goes from 0 to 16 and the top and bottom pieces are 1-dimensional. As before, setting the element in top degree to be nonzero cuts out the  big cell.

The ideal of $Y_{\sigma_3}$ is generated by $7$ elements which are the Pl\"ucker coordinates corresponding to the elements
$$\begin{matrix}1&0&0&0&0&0\\
&0&&&\\
&0&&&
\end{matrix},\qquad
\begin{matrix}-1&1&0&0&0&0\\
&0&&&&\\
&0&&&&
\end{matrix},\qquad
\begin{matrix}0&-1&1&0&0&0\\
&1&&&&\\
&0&&&&
\end{matrix},$$
$$\begin{matrix}0&0&-1&1&0&0\\
&1&&&&\\
&0&&&&
\end{matrix}, \qquad
\begin{matrix}0&0&0&-1&1&0\\
&1&&&&\\
&0&&&&
\end{matrix},\qquad
\begin{matrix}0&0&0&0&-1&1\\
&1&&&&\\
&0&&&&
\end{matrix},\qquad
\begin{matrix}0&0&0&0&0&-1\\
&1&&&&\\
&0&&&&
\end{matrix}.$$

There are 5 Pl\"ucker coordinates vanishing on $Y_{\sigma'_3}$, they correspond to weights
$$\begin{matrix}1&0&0&0&0&0\\
&0&&&&\\
&0&&&&
\end{matrix},\qquad
\begin{matrix}-1&1&0&0&0&0\\
&0&&&&\\
&0&&&&
\end{matrix},\qquad
\begin{matrix}0&-1&1&0&0&0\\
&1&&&&\\
&0&&&&
\end{matrix},$$
$$\begin{matrix}0&0&1&0&0&0\\
&-1&&&&\\
&1&&&&
\end{matrix},\qquad
\begin{matrix}0&0&1&0&0&0\\
&0&&&&\\
&-1&&&&
\end{matrix}.$$

\begin{theorem}
  The minimal free resolution of $Y_{\sigma_3}$ is of the form:
\[
0\rightarrow A^2(-31)\rightarrow A^8(-21)\rightarrow A(-16)\oplus A^6(-15)\rightarrow A
  \]
  The minimal free resolution of $Y_{\sigma'_3}$ is of the form:
  \[
0\rightarrow A^4(-31)\rightarrow A^8(-25)\rightarrow A(-16)\oplus A^4(-15)\rightarrow A
  \]
  Furthermore, the ideals of these varieties are linked by a complete intersection of length $3$.

 The resolution for the sum of the two ideals has the form
  \[
0\rightarrow A(-46)\rightarrow A^8(-31)\oplus A(-30)\rightarrow A^8(-25)\oplus A^8(-21)\rightarrow A(-16)\oplus A^8(-15)\rightarrow A.
  \]
\end{theorem}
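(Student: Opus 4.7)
The plan is to mirror the $\rE_6$ proof: linkage via the common Pl\"ucker coordinates determines everything. The three coordinates $p_{\sigma_0}, p_{\sigma_1}, p_{\sigma_2}$ lie in both $I_{\sigma_3}$ and $I_{\sigma'_3}$ and have polynomial degrees $16, 15, 15$ on $Y$ (using the correspondence between $\alpha_2$-degree on $V_{x_1}$ and polynomial degree, exactly as in Section~3), so they generate a complete intersection $(\alpha) = I_{\sigma_3} \cap I_{\sigma'_3}$ of total degree $46$. The ideal $(\alpha)$ is radical by \cite[Corollary~2.3.3]{brion-kumar}, and since both Schubert ideals are radical with $V(\alpha)$ equal to the union of the two Schubert varieties, \cite[\S 1]{BU90} gives the algebraic linkage $I_{\sigma'_3} = (\alpha) : I_{\sigma_3}$.

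Starting from $0 \to \bF_3 \to \bF_2 \to A(-16) \oplus A(-15)^6 \to A$ for $A/I_{\sigma_3}$ (the format of the first map being read off the seven listed Pl\"ucker generators), I would apply the liaison mapping cone of \cite[\S 5]{DABDEs77}: dualizing the cone of $\bK \to \bG$, where $\bK$ is the Koszul complex of $(\alpha)$ and $\bG$ is the resolution of $A/I_{\sigma_3}$, and shifting by $-46$ gives a non-minimal resolution of $A/I_{\sigma'_3}$. The three shared Pl\"ucker generators contribute identity components (one $A(-46)$, one $A(-30)$, and two of the six $A(-31)$'s) that cancel, leaving
\[
0 \to A(-31)^4 \to \bF_2^*(-46) \to \bF_3^*(-46) \oplus A(-16) \oplus A(-15)^2 \to A.
\]
Since $I_{\sigma'_3}$ has exactly five minimal generators (one in degree $16$ and four in degree $15$) and $\bF_2 \to \bF_3$ is minimal, $\bF_3^*(-46)$ must supply exactly two degree-$15$ generators and nothing else, forcing $\bF_3 = A(-31)^2$. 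The Euler characteristic then gives $\rank \bF_2 = 8$, and requiring the Hilbert series numerator to be divisible by $(1-t)^3$ yields $\sum_{i=1}^8 d_i = 168$ and $\sum_{i=1}^8 d_i^2 = 3528$; the plane is tangent to the sphere at $(21, \dots, 21)$, so $\bF_2 = A(-21)^8$, completing both claimed resolutions with $\bF_2^*(-46) = A(-25)^8$.

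For the sum, \cite[Proposition~1.3]{BU90} gives that $I_{\sigma_3} + I_{\sigma'_3}$ is Gorenstein of codimension $4$ (defining the codim-$4$ Schubert variety $Y_{\sigma_4}$). Its resolution comes from the Mayer--Vietoris mapping cone of $\bK \to \bG \oplus \bG'$, where $\bG'$ denotes the resolution of $A/I_{\sigma'_3}$ just obtained. The diagonal map $A \to A \oplus A$ cancels one copy of $A$ between positions $0$ and $1$, and the three common generators provide cancellations of $A(-16) \oplus A(-15)^2$ between positions $1$ and $2$; no further cancellation is possible since the degree patterns $A(-21)^8 \oplus A(-25)^8$ in position $2$ and $A(-31)^8 \oplus A(-30)$ in position $3$ do not align with each other or with the Koszul pieces, and $\bK_3 = A(-46)$ survives as the Gorenstein top, matching the statement exactly.

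The hard part will be the bookkeeping for the two cascades of cancellations in the mapping-cone constructions and verifying the claimed identity components against the Pl\"ucker generator data of this section. Everything else is either standard linkage theory or the same tangent-plane-to-sphere rigidity that already appears in the $\rE_6$ case, only with larger integers.
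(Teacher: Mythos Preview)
Your proof is correct and follows the same linkage strategy as the paper's $\rE_6$ argument: identify $(\alpha)=(p_{\sigma_0},p_{\sigma_1},p_{\sigma_2})$ as a radical complete intersection of degrees $16,15,15$, link $I_{\sigma_3}$ and $I_{\sigma'_3}$ through it, read off $\bF_3$ from the known minimal generators of $I_{\sigma'_3}$, and then use the Hilbert-series tangent-plane-to-sphere rigidity ($\sum d_i=168$, $\sum d_i^2=3528$, equality in Cauchy--Schwarz) to force $\bF_2=A(-21)^8$. The numerics all check.

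The one genuine variation is your treatment of the sum $I_{\sigma_3}+I_{\sigma'_3}$: you build its resolution from the Mayer--Vietoris cone $\bK_\bullet \to \bG_\bullet \oplus \bG'_\bullet$, whereas the paper invokes \cite[Proposition~1.3]{BU90} and takes the mapping cone of the canonical module of $A/I_{\sigma_3}$ into its resolution. Both constructions yield the same complex, and yours is arguably more transparent since the cancellations (the diagonal $A$ in degree $0$ and the three shared generators $A(-16)\oplus A(-15)^2$ between degrees $1$ and $2$) are visibly forced by the combinatorics of the shared Pl\"ucker coordinates; the Koszul pieces $A(-31)^2\oplus A(-30)$ and $A(-46)$ then survive intact for degree reasons, exactly as you say.
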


The numerator of the reduced Hilbert series of $\bk[Y_{\sigma_3}]$ is
$$2t^{28}+6t^{27}+12t^{26}+20t^{25}+30t^{24}+42t^{23}+56t^{22}+72t^{21}+90t^{20}+$$
  $$+110t^{19}+124t^{18}+132t^{17}+134t^{16}+130t^{15}+120t^{14}+105t^{13}+91t^{12}+78t^{11}+$$
  $$+66t^{10}+55t^9+45t^8+36t^7+28t^6+21t^5+15t^4+10t^3+6t^2+3t+1$$
  and hence its degree is 1640. The numerator of the reduced Hilbert series of $\bk[Y_{\sigma'_3}]$ is
$$4t^{28}+12t^{27}+24t^{26}+40t^{25}+60t^{24}+84t^{23}+104t^{22}+120t^{21}+132t^{20}+$$
  $$+140t^{19}+144t^{18}+144t^{17}+140t^{16}+132t^{15}+
  120t^{14}+105t^{13}+91t^{12}+78t^{11}+$$
  $$+66t^{10}+55t^9+45t^8+36t^7+28t^6+21t^5+15t^4+10t^3+6t^2+3t+1$$
  and hence its degree is $1960$.
  The numerator of the reduced Hilbert series of $\bk[Y_{\sigma_3} \cap Y_{\sigma'_3}]$ is
  \begin{align*}
t^{42}+4t^{41}+10t^{40}+20t^{39}+35t^{38}+56t^{37}+84t^{36}+
      120t^{35}+165t^{34}+220t^{33}+286t^{32}\\+364t^{31}+455t^{30}+
      560t^{29}+680t^{28}+808t^{27}+936t^{26}+1056t^{25}+1160t^{24}
    +1240t^{23}+1288t^{22}\\+1304t^{21}+1288t^{20}+
    1240t^{19}+1160
      t^{18}+1056t^{17}+936t^{16}+808t^{15}+680t^{14}+560t^{13}\\+455
    t^{12}+364t^{11}+286t^{10}+220t^{9}
    +165t^{8}+120t^{7}+84t^{6
      }+56t^{5}+35t^{4}+20t^{3}+10t^{2}+4t+1
  \end{align*}
  and its degree is $20400$.

\begin{remark}
We conjecture that the restriction of $Y_{\sigma_3}$ and $Y_{\sigma'_3}$ to $\bigwedge^3 \bk^8$ remain codimension 3. We have been unable to verify this even computationally; we now describe the restrictions assuming the fact.

There is an invariant $\Delta$ of degree $16$ on $\fg_1$, which can be described as a hyperdiscriminant of the representation $\bigwedge^3 \bk^8$, i.e., the equation of the hypersurface projectively dual to the Grassmannian (see for example  \cite[\S 9C]{JWm03}).

We write $\bk^8=F\oplus G$ with $\dim F=6$, $\dim G=2$ with $A' = \Sym(\fg_1)$. Conjecturally, when restricted to $A'$, the resolution of $Y_{\sigma_3}$ is
\[
  0\rightarrow A'^2(-31)\rightarrow A'^8(-21)\rightarrow A'(-16)\oplus A'^6(-15)\rightarrow A'.
\]
We also conjecture that the ideal of $Y_{\sigma_3}$ is generated by $\Delta$, $\partial\Delta /\partial{y_i}$, where $\lbrace y_i\rbrace$ is a basis of the dual of $F\otimes\bigwedge^2G$. As far as we are aware, this does not follow formally from the codimension conjecture in this case.

The equivariant format of this resolution is (in characteristic 0):
$$0\rightarrow \bS_{12,12,12,12,12,12}F\otimes \bS_{11,10}G\otimes A'(-31)\rightarrow
\begin{array}{c}
  \bS_{8,8,8,8,8,8}F\otimes \bS_{8,7}G\otimes A'(-9)\oplus\\
  \bS_{9,8,8,8,8,8}F\otimes \bS_{7,7}G\otimes A'(-21)
\end{array}
\rightarrow$$
\[
\begin{array}{c} \bS_{6,6,6,6,6,6}F\otimes \bS_{6,6}G\otimes A'(-16)\oplus\\
  \bS_{6,6,6,6,6,5}F\otimes  \bS_{5,5}G\otimes A'(-15)
\end{array}
\rightarrow A'.
\]

  To describe the restriction of $Y_{\sigma'_3}$, we set $\bk^8=F'\oplus G'$ where $\dim F'=4$, $\dim G'=4$ and $A''=\Sym(\fg_1)$. Conjecturally, when restricted to $A''$, the resolution is
  \[
    0\rightarrow A''^4(-31)\rightarrow A''^8(-25)\rightarrow A''(-16)\oplus A''^4(-15)\rightarrow A''.
\]
We conjecture that the ideal of $Y_{\sigma'_3}$ is generated by five elements  $\Delta$, $\partial\Delta /\partial{z_i}$, where $\lbrace z_i\rbrace$ is a basis of the dual of $\bigwedge^3G'$. Again, this does not seem to follow formally from the codimension conjecture.

The equivariant format of this resolution is (in characteristic 0):
$$0\rightarrow \bS_{13,12,12,12}F'\otimes \bS_{11,11,11,11}G'\otimes A''(-31)\rightarrow
\begin{array}{c}
  \bS_{10,10,10,10}F'\otimes \bS_{9,9,9,8}G'\otimes A''(-9)\oplus\\
  \bS_{10,10,10,9}F'\otimes \bS_{9,9,9,9}G'\otimes A''(-25)
\end{array}
\rightarrow$$
\[
  \begin{array}{c}
    \bS_{6,6,6,6}F'\otimes \bS_{6,6,6,6}G'\otimes A''(-16)\oplus\\
    \bS_{6,6,6,6}F'\otimes  \bS_{6,5,5,5}G'\otimes A''(-15)
  \end{array}
  \rightarrow A''. 
\]
\end{remark}

\section{Questions}

In the Dynkin case the most important problem is to decide whether the five resolutions we get for Dynkin types $\rE_6$, $\rE_7$, $\rE_8$ are generic resolutions for perfect ideals of codimension three with resolutions of these formats.
This is equivalent to saying that each perfect ideal of codimension 3 with a resolution of Dynkin format has a split complex ${\bf F}_\bullet^{\rm top}$.

The pattern with the Pl\"ucker coordinates and the pair of Schubert varieties of codimension 3 generalizes beyond Dynkin diagrams.
For any $\rT_{p,q,r}$ with $p=2$ we get in the homogeneous space $G(\rT_{p,q,r})/P_{x_1}$ ($G$ is a Kac--Moody group corresponding to $\rT_{p,q,r}$, $P_{x_1}$ is a parabolic corresponding to a simple root corresponding to vertex $x_1$) two opposite Schubert varieties $\Omega_{\sigma_3}$ and $\Omega_{\sigma'_3}$ of codimension 3. These are ind-varieties. 
The opposite Schubert varieties $\Omega_{\sigma_3}$ and $\Omega_{\sigma'_3}$ are normal and Cohen--Macaulay by \cite[Proposition 3.4]{KS09}.

In this case there is no analogue of the big cell $Y$.   Instead we should do the following. Let us denote by $p_w$ the Pl\"ucker coordinate corresponding to $w\in W/W_P$.
We have open sets $U_v$ ($v\in W/W_P$) in $G(\rT_{p,q,r})/P_{x_1}$ consisting of points for which $p_v\ne 0$. The sets $U_v$ are infinite dimensional affine spaces.
One should look at the sets $Y_{v,\sigma_3}:=\Omega_{\sigma_3}\cap U_v$ and $Y_{v,\sigma'_3}:=\Omega_{\sigma'_3}\cap U_v$.

In the non-Dynkin cases the main questions are as follows.
\begin{enumerate}
\item Can we find a sequence of open cells $Y_n$, cofinal in the Bruhat order on $W/W_P$, so that for each $n$, the defining ideals of $Y_n\cap Y_{\sigma_3}$ and of $Y_n\cap Y_{\sigma'_3}$ have the resolutions of the corresponding formats?
\item If the answer to the preceding question is yes, one could ask whether the series of resolutions ${\bf F}^{(n)}_\bullet$ of the cyclic modules given by the defining ideals of
$Y_n\cap Y_{\sigma_3}$ and of $Y_n\cap Y_{\sigma'_3}$ could have the versality property with respect to free resolutions of perfect ideals of this format, i.e., each such resolution comes by a change of rings from the resolution ${\bf F}^{(n)}_\bullet$ for some $n$.
\end{enumerate}

 One needs first to deal with the affine cases, i.e., the diagrams ${\hat \rE}_7=\rT_{2,4,4}$ (self-linked format $(1,6,8,3)$) and with ${\hat \rE}_8=\rT_{2,3,6}$ (formats $(1,8,9,2)$ and $(1,5,9,5)$).

\end{document}